\crefname{section}{Section}{Sections}
\crefname{subsection}{\S}{\S\S}
\theoremstyle{change}
\newtheorem{lemma}{Lemma}[subsection]
\newtheorem{proposition}[lemma]{Proposition}
\newtheorem{corollary}[lemma]{Corollary}
\newtheorem{theorem}[lemma]{Theorem}
\newtheorem{question}[lemma]{Question}
\theoremstyle{nonumberplain}
\newtheorem{theoremN}{Theorem}
\theoremstyle{change}
\newtheorem{de}[lemma]{Definition}
\newtheorem{ex}[lemma]{Example}
\newtheorem{re}[lemma]{Remark}
\crefname{definition}{definition}{definitions}
\crefname{lemma}{lemma}{lemmas}
\crefname{proposition}{proposition}{propositions}
\crefname{example}{example}{examples}
\crefname{remark}{remark}{remarks}
\crefname{corollary}{corollary}{corollaries}
\crefname{theorem}{theorem}{theorems}
\crefname{equation}{}{}
\theoremstyle{nonumberplain}
\newtheorem{proof}{Proof}
\newtheorem{proof of main}{Proof of \Cref{th.main}}
\DeclareMathOperator{\id}{id}
\newcommand\bN{{\mathbb N}}
\newcommand\bS{{\mathbb S}}
\newcommand\bZ{{\mathbb Z}}
\DeclareMathOperator{\cqg}{\mathrm{CQG}}
\DeclareMathOperator{\lker}{\cat{LKer}}
\DeclareMathOperator{\rker}{\cat{RKer}}
\newcommand{\define}[1]{{\em #1}}
\newcommand{\cat}[1]{\textsc{#1}}
\newcommand{\qedhere}{\mbox{}\hfill\ensuremath{\blacksquare}}
\title{Free unitary groups are (almost) simple}
\author{Alexandru Chirvasitu\footnote{University of California at Berkeley, \url{chirvasitua@math.berkeley.edu}}}
\begin{document}
\maketitle

\begin{abstract} 
We show that the quotients of Wang and Van Daele's universal quantum groups by their centers are simple in the sense that they have no normal quantum subgroups, thus providing the first examples of simple compact quantum groups with non-commutative fusion rings.  
\end{abstract}

\noindent {\em Keywords: free unitary group, simple compact quantum group, CQG algebra}

\tableofcontents


\section*{Introduction}

A short note such as this one can hardly do justice to the richness of the subject of quantum groups, so we will simply refer the reader to the papers mentioned below, as well as to the vast literature cited by those papers, for a comprehensive view of the history and intricacies of the subject. Let us just mention that the quantum groups in this paper are of function algebra type, rather than of the quantized universal enveloping algebra kind introduced in \cite{MR797001,MR934283}. The former are non-commutative analogues of algebras of continuous functions on a (here compact) group, and were formalized in essentially their present form in \cite{Woronowicz1987}. To be more specific, we only work with the algebraic counterparts of the objects described by Woronowicz. These are the so-called CQG algebras of \cite{MR1310296}, and mimic the algebras of \define{representative} functions on a compact group, minus the commutativity. They are complex Hopf $*$-algebras satisfying an additional condition whih ensures, among other things, the semisimplicity of their categories of comodules. 

It makes sense, in view of the importance and ubiquity of \define{simple} compact Lie groups, to study analogous notions in the quantum setting. This program was initiated in \cite{MR2504527}, where simple compact quantum groups are defined (along with the notion of normal compact quantum subgroup) and examples are provided. It is observed there that these examples are all \define{almost classical}, in the sense that their so-called fusion algebras (meaning Grothendieck rings of their categories of finite-dimensional comodules) are isomorphic to fusion algebras of compact Lie groups. 

One source of compact quantum groups is provided by ordinary compact Lie groups deformed in some sense (\cite{MR1116413,MR1216204} and references therein), but more exotic examples can be obtained as quantum automorphism groups of various structures, such as, say, bilinear forms \cite{MR1068703}, finite-dimensional $C^*$-algebras endowed with a trace \cite{MR1637425,MR1709109}, finite graphs \cite{MR1937403}, finite metric spaces \cite{MR2174219}, etc. In the context of simplicity, it turns out \cite[$\S$5]{MR2504527} that deforming simple compact Lie groups produces, as expected, simple compact quantum groups. On the other hand, it is shown in the same paper ($\S$4) that quantum automorphism groups of traced finite-dimensional $C^*$-algebras are simple, while quantum automorphism groups of non-degenerate bilinear forms have a single non-trivial normal compact quantum subgroup of order $2$.  

The universal quantum groups $A_u(Q)$ (the free unitary groups of the title) parametrized by invertible positive matrices $Q$ were introduced in \cite{MR1382726}. Since they jointly play the same role as that of the family of unitary groups in the theory of compact Lie groups, they are arguably the first examples one should test compact quantum group notions or tentative results against. Wang notes in \cite[Proposition 4.5]{MR2504527} that $A_u(Q)$ always has a central one-dimensional torus (just like classical unitary groups), and hence cannot possibly be simple. The main result of this paper is that the quotient by this central circle group is nevertheless simple, again as in the classical case (see \Cref{se.prel} for an explanation of the terminology):

\begin{theoremN}
Let $Q\in GL_n$ be a positive invertible matrix. The quotient of the quantum group $A_u(Q)$ by its central circle subgroup $A_u(Q)\to C(\bS^1)$ is simple. 
\end{theoremN}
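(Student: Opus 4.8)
The plan is to rephrase the statement as a combinatorial fact about Banica's fusion rules for $A_u(Q)$ and prove it by induction on word length. Write $G:=A_u(Q)/C(\bS^1)$ for the quotient in question (assume $n\ge 2$; for $n=1$ it is trivial). Recall that the irreducible corepresentations of $A_u(Q)$ are indexed by the free monoid on two letters $\alpha,\beta$, with $\alpha=u$, $\beta=\bar u$, conjugation reversing a word and swapping $\alpha\leftrightarrow\beta$, and fusion given by ``cancel a matching suffix/prefix pair and concatenate''. The central circle records the grading $\deg\alpha=1$, $\deg\beta=-1$; since this circle is central, Schur's lemma makes every irreducible homogeneous, so $\mathrm{Rep}(G)$ is the full subcategory on the \emph{balanced} words (equal numbers of $\alpha$'s and $\beta$'s), which is closed under tensor, conjugation and subobjects because $\deg$ is additive. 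By Wang's correspondence together with Tannaka--Krein, a normal quantum subgroup $N\trianglelefteq G$ is the same as a normal (i.e.\ adjoint-stable) Hopf $*$-subalgebra $B\subseteq C(G)$, equivalently the tensor subcategory $\mathcal D\subseteq\mathrm{Rep}(G)$ of balanced words whose coefficients lie in $B$, with $\mathcal D$ inheriting adjoint-stability from $B$; here $N$ is trivial iff $\mathcal D$ is everything, and $N=G$ iff $\mathcal D=\{\mathbf 1\}$, so the theorem is exactly that these are the only possibilities.

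The single consequence of adjoint-stability I will use is: since $B$ is a subcoalgebra of $C(A_u(Q))$ and every irreducible spans a \emph{simple} subcoalgebra $C_v$, one has $B\cap C_v\in\{0,C_v\}$; and for any corepresentation $w$ and any nontrivial $r\in\mathcal D$, the adjoint action of the coefficients of $w$ on those of $r$ is nonzero and lands in $C_{w\otimes r\otimes\bar w}$. Hence, \emph{whenever $r\in\mathcal D$ and $w\otimes r\otimes\bar w$ is irreducible, $w\otimes r\otimes\bar w\in\mathcal D$.} To apply this with $w=u$ (not a corepresentation of $G$) I first lift: $N$ corresponds to a normal quantum subgroup $\widetilde N\trianglelefteq A_u(Q)$ lying over $N$ and containing the circle, so $B=C(A_u(Q)/\widetilde N)$ is adjoint-stable inside $C(A_u(Q))$ and the conjugations $r\mapsto w\otimes r\otimes\bar w$ become available for all corepresentations $w$ of $A_u(Q)$.

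Now assume $\mathcal D\ne\{\mathbf 1\}$. If $r$ is a nonempty balanced word in $\mathcal D$, the fusion rule shows $r\otimes\bar r$ contains the length-two word (first letter of $r$)(its conjugate), i.e.\ $\alpha\beta$ or $\beta\alpha$; say $\alpha\beta\in\mathcal D$. Conjugating $\alpha\beta$ by the balanced word $\beta\alpha$ gives the single irreducible $\beta\alpha\alpha\beta\beta\alpha$, hence $\beta\alpha\alpha\beta\beta\alpha\in\mathcal D$, and the previous paragraph applied to it (its $r\otimes\bar r$ contains $\beta\alpha$) yields $\beta\alpha\in\mathcal D$; so all balanced words of length $\le 2$ lie in $\mathcal D$. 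For the inductive step on a balanced word $w$ of length $2m\ge 4$: if $w$ has a proper nonempty balanced prefix, write $w=w_1w_2$ with $w_1,w_2$ balanced of smaller length, so $w_1,w_2\in\mathcal D$ by induction and the no-cancellation term of $v_{w_1}\otimes v_{w_2}$ is exactly $v_w$. Otherwise $w$ is primitive, and a ballot-type analysis of the partial sums of $w$ forces $w=\alpha w'\beta$ (or symmetrically $\beta w'\alpha$) with $w'$ balanced of length $2m-2$, starting with $\alpha$ and ending with $\beta$; by induction $w'\in\mathcal D$, and because of the end-letters $u\otimes w'\otimes\bar u$ has no cancellations and equals the single irreducible $v_{\alpha w'\beta}=v_w$, so $w\in\mathcal D$. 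Thus $\mathcal D$ contains every balanced word, $N$ is trivial, and $G$ is simple.

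I expect the real work to be in the first two paragraphs rather than the combinatorial core: identifying the precise form of Wang's normality condition and checking it is equivalent to adjoint-stability of the associated Hopf $*$-subalgebra; verifying that adjoint-stability genuinely forces $w\otimes r\otimes\bar w\in\mathcal D$ when that corepresentation is irreducible (the nonvanishing of the adjoint action and the simple-subcoalgebra step); and setting up the lift $A_u(Q)\leftrightarrow G$ cleanly so that conjugation by the fundamental corepresentation $u$ is legitimate. Once that machinery is in place the ballot argument and the two-case induction are short, but they do use both closure of $\mathcal D$ under tensor products and conjugations \emph{and} the extra conjugation-by-$u$ move supplied by normality.
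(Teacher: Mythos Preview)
Your approach is correct and close in spirit to the paper's, but the inductive step is organized differently and this difference has a real cost. The paper also reduces to combinatorics on balanced words, proves the base case $\{r_{01},r_{10}\}\subset R_+(C)$ exactly as you do (multiply a nontrivial simple by its dual, then conjugate $r_{01}$ by $r_{10}$ inside $P$ to produce $r_{100110}$), but then splits the induction into ``two contiguous blocks'' versus ``more than two blocks''. The two-block words $r_{0^n1^n}$ are obtained by conjugating $r_{10}\in C$ by the balanced element $r_{0^n1^n}\in P$ (so the adjoint action stays inside $P$), and any other balanced word $x=0^n1y$ is exhibited as a summand of $r_{0^n1^n}\cdot r_{0^{n-1}y}$, a product of two strictly shorter balanced words. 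No conjugation by $u$ is ever needed.

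Your ``balanced prefix vs.\ primitive'' split is combinatorially clean, and your ballot argument that a primitive $w=\alpha w'\beta$ has $w'$ starting with $\alpha$ and ending with $\beta$ (so $r_\alpha r_{w'} r_\beta$ is irreducible) is correct. The price is that you must conjugate by the unbalanced $u=\alpha$, and for this you invoke a lift of $N\trianglelefteq G$ to $\widetilde N\trianglelefteq A_u(Q)$. That lift is true, but it is not free: you are asserting that a CQG subalgebra $C\subset P$ which is ad-invariant in $P$ is automatically ad-invariant in $A$. This uses the centrality of the circle (so that $\Delta$ preserves the $\bZ$-grading in each tensor factor and $S$ reverses it, whence the $A$-adjoint action on $P=A_0$ descends appropriately); it is the Hopf-algebraic analogue of ``a normal subgroup of $G/Z$ pulls back to a normal subgroup of $G$''. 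You correctly flag this as the place needing work. The paper simply sidesteps it by never leaving $P$; what it buys you in return is a slightly shorter induction once the lift is in hand.
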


The outline of the paper: We recall the relevant terminology, conventions and results in \Cref{se.prel}, and prove the main results (one of which is stated above) in \Cref{se.main}.


\subsection*{Acknowledgement} I am grateful to Shuzhou Wang for fruitful conversations on the contents of \cite{MR2504527}, and for his many suggestions on the improvement of this paper.


\section{Preliminaries}\label{se.prel}

We assume the basics of Hopf algebra and coalgebra theory, as covered in, say, \cite{MR1243637,MR1786197}. Comultiplications, counits, and antipodes are denoted as usual by $\Delta,\varepsilon$ and $S$ perhaps adorned with the name of the coalgebra or Hopf algebra as in $\Delta_H$, and we use Sweedler notation with suppressed summation symbol for comultiplication, as in $\displaystyle \Delta(x)=x_1\otimes x_2$. Comodules are always right and finite-dimensional, and everything in sight is complex. 

For CQG algebras, unitary and unitarizable comodules, and all other notions that go into formalizing compact quantum groups in a purely algebraic setting we refer to \cite{MR1310296} or \cite[$\S$11]{MR1492989}. Here, we only remind the reader that a CQG algebra is a Hopf $*$-algebra (Hopf algebra endowed with a conjugate linear, involutive algebra anti-automorphism `$*$' making both $\Delta$ and $\varepsilon$ morphisms of $*$-algebras) satisfying an additional condition which ensures that all finite-dimensional comodules admit an inner product invariant under the coaction in some sense (\cite[11.1.5]{MR1492989}). This accords with the point of view that representations of the compact quantum group are \define{co}modules over the Hopf algebra which is supposed to behave like the algebra of representative functions on the otherwise fictitious group. Denote by $\cqg$ the category of CQG algebras and Hopf $*$-algebra morphisms. 

We also take for granted the correspondence between comodules over a coalgebra $C$ and subcoalgebras of $C$, sending a $C$-comodule $V$ to the smallest subcoalgebra $C_V\subseteq C$ such that the structure map $V\mapsto V\otimes C$ factors through $V\otimes C_V$. This correspondence induces a bijection between isomorphism classes of simple comodules and simple subcoalgebras of $C$, and in a Hopf algebra $H$ it behaves well with respect to multiplication and the antipode: The coalgebra associated to the tensor product $V\otimes W$ is exactly the product $C_VC_W$ in $H$ (that is, the linear span of products of elements from $C_V$ and $C_W$), while the coalgebra corresponding to the dual $V^*$ is $S(C_V)$. 

The \define{fusion semiring} of a Hopf algebra is the Grothendieck semiring of its category of (finite-dimensional, right) comodules. Note that it admits a natural anti-endomorphism $\alpha\mapsto\alpha^*$, sending (the class of) a comodule to (the class of) its dual. When the Hopf algebra is cosemisimple (e.g. a CQG algebra), this anti-endomorphism is involutive. By a slight abuse of notation, if $x$ is the class of $V$ in the fusion semiring, we write $C_x$ for $C_V$.


\subsection{Free unitary groups}\label{subse.free}

The main characters in this paper are the objects $A_u(Q)$ mentioned in the introduction. Here, $Q$ is a positive invertible $n\times n$ matrix, and by definition, $A=A_u(Q)$ is the $*$-algebra freely generated by $n^2$ elements $u_{ij}$, $i,j=\overline{1,n}$ subject to the conditions that both $u=(u_{ij})_{i,j}$ and $Q^{\frac 12}\overline uQ^{-\frac 12}$ be unitary $n\times n$ matrices in $A$ (where $\overline u=(u_{ij}^*)_{i,j}$); cf. \cite[11.3.1, Example 6]{MR1492989}, or \cite{MR1382726}, where these objects were first introduced in their $C^*$-algebraic versions. 

It turns out that $A$ can be made into a CQG algebra by demanding that $u_{ij}$ be the $n^2$ ``matrix units'' of an $n\times n$ matrix coalgebra: \[ \Delta(u_{ij}) = \sum_k u_{ik}\otimes u_{kj},\quad \varepsilon(u_{ij})=\delta_{ij}. \] The antipode is then defined by $S(u)=u^*=(u_{ji}^*)_{i,j}$, and one checks the CQG condition by observing that both the comodule $V$ with basis $e_i$, $i=\overline{1,n}$ whose comodule structure is defined by $e_j\mapsto \sum_i e_i\otimes u_{ij}$ and its dual are unitarizable; we refer to the cited sources for details on how this works. 

The reason why these are called `free unitary' is that every CQG algebra finitely generated as an algebra is a quotient of some $A_u(Q)$; keeping in mind the arrow reversal implicit in passing from groups to functions on them, this means that every ``compact quantum Lie group'' embeds in the compact quantum group associated to some $A_u(Q)$. In other words, collectively, the $A_u(Q)$'s play the same role in the world of compact quantum groups as the family of all unitary groups does in the theory of ordinary compact groups. 

It will be important to recall the structure of the fusion semiring $R_+$ of $A=A_u(Q)$, as worked out in \cite{MR1484551}. One of the main results of that paper is that there is a bijection $\bN*\bN\ni x\mapsto r_x\in R_+$ between the free monoid on two elements and the set of simple comodules, with multiplication in $R_+$ given by 
\begin{equation}\label{eq.fusion} 
r_xr_y = \sum_{x=ag,y=g^*b}r_{ab}. 
\end{equation} 
Here, `$*$' is the involutive anti-automorphim of $\bN*\bN$ interchanging the two copies of $\bN$, and the generators $\alpha$ and $\alpha^*$ of the two $\bN$'s correspond respectively to the fundamental comodule $V$ from the next-to-last paragraph and its dual.


\subsection{Normal quantum subgroups}\label{subse.normal}

Always keeping in mind arrow reversal, a (closed) \define{quantum subgroup} of the (compact quantum group with) CQG algebra $A$ should be a quotient CQG algebra $A\to B$. This is indeed the standard definition in the literature, and the one we employ here. We regard the arrow itself as the quantum subgroup, and identify $A\to B$ and $A\to B'$ as quantum subgroups provided the two are isomorphic in the category of arrows in $\cqg$ sourced at $A$ (in the terminology of \cite[2.7]{MR2504527}, we identify quantum subgroups whenever they have the same imbedding).   

Following \cite[1.1.5]{MR1334152} and \cite[2.2]{MR2504527}, a quantum subgroup $\pi:A\to B$ of a compact quantum group is said to be \define{normal} if the corresponding right and left quantum coset spaces \[\lker(\pi)=\{ a\in A\ |\ \pi(a_1)\otimes a_2 = 1_B\otimes a \} \] and respectively \[\rker(\pi)=\{ a\in A\ |\ a_1\otimes \pi(a_2) = a\otimes 1_B \} \] coincide. The resulting linear subspace $\lker=\rker=C\le A$ then turns out to be a CQG subalgebra of $A$ and can be interpreted as functions on the quotient of the compact quantum group corresponding to $A$ by the normal quantum subgroup $\pi:A\to B$. 

It is perhaps worth pointing out that $\pi$ and $\iota$ determine each other: We have just seen how to get $\iota$ from $\pi$, and conversely, it can be shown (e.g. \cite[4.4]{MR2504527}) that $\pi:A\to B$ is precisely the quotient of $A$ by the ideal $AC^+=C^+A$, where $C^+=\ker(\varepsilon_C)$. What this means, in other words, is that the inclusion $iota:C\to A$ and the surjection $\pi:A\to B$ fit into an exact sequence $0\to C\to A\to B\to 0$ of Hopf (in our case also $*$-) algebras in the sense of \cite[1.2.3]{MR1334152}.

\begin{re}\label{rem.normal}
If $\iota:C\to P$ is to be half of an exact sequence, then $C$ must be invariant under the right (and also left) adjoint action of $P$ on itself given by $q\triangleleft p=S(p_1)qp_2$ (respectively $p\triangleright q=p_1qS(p_2)$). Indeed, it is observed in the proof of \cite[1.1.12]{MR1334152} that for any Hopf algebra morphism $f$, $\lker(f)$ as defined in \Cref{subse.normal} is invariant under the right adjoint action. It is for this reason that we refer to the CQG subalgebras $\iota:C\to P$ of interest, i.e. those giving rise to exact sequences, as \define{ad-invariant}.

In fact, invariance of a Hopf $*$-subalgebra $C\to P$ under either the left or right adjoint action is also sufficient in order that it be part of an exact sequence. This is proven in \cite[1.2.5]{MR1334152} provided $P$ is faithfuly flat over $C$; the latter condition always holds (that is, a CQG algebra is always faithfully flat over a CQG subalgebra) by \cite{2011arXiv1110.6701C}.   
\end{re}

In view of the above discussion, the following is very reasonable:

\begin{de}\label{def.simple}
A compact quantum group $A\in\cqg$ is \define{simple} if there are no normal quantum subgroups $\pi:A\to B$ apart from $\varepsilon_A$ and $\id_A$. 
\end{de}

\begin{ex}\label{ex.center}
As shown in \cite[4.5]{MR2504527}, sending $u_{ij}$ to $\delta_{ij}t$ implements a normal embedding of the one-dimensional torus $\bS^1$ with algebra of representative functions $C(\bS^1)=\mathbb C[t,t^{-1}]$ into any of the free unitary groups $A_u(Q)$. The aim of this paper is to prove that the resulting quotient is simple in the sense of \Cref{def.simple}. 

We denote the CQG algebra associated to this quotient by $P_u(Q)$, standing for `projective'. This is motivated by the fact that $P_u(Q)$ is a kind of ``projectivized'' version of the free unitary group. 
\end{ex}

\begin{re}\label{rem.fingen}
The terminology conflicts slightly with that of \cite[3.3]{MR2504527}: On the one hand, Wang's definition of simplicity only demands that there be no non-trivial \define{connected} normal quantum subgroups, and refers to the stronger form of simplicity from \Cref{def.simple} as \define{absolute}. I prefer the shorter term because there is no need for that distinction in this paper. In other ways though, \Cref{def.simple} might seem weaker than \cite[3.3]{MR2504527}, because it makes no mention of the other three conditions of the latter (numbered as in that paper): 

(1) In order to be simple, Wang requires that a CQG algebra be finitely generated. This is indeed not the case for $P_u(Q)$ (\Cref{rem.not_fingen}), but we remedy the problem in \Cref{prop.main}, where we provide a smaller, ``less canonical'' example of a simple CQG satisfying this additional condition.

(2) The simple CQG's of \cite{MR2504527} are required to be connected, in the sense that the $*$-subalgebra generated by any simple subcoalgebra is infinite-dimensional. $A_u(Q)$ and hence its CQG subalgebras are easily seen to satisfy this condition, so we need not worry about it any longer. 

(4) Simple CQG's are not supposed to have any non-trivial group-like elements, or equivalently, one-dimensional comodules, or again, one-dimensional subcoalgebras. Once more, this is automatically satisfied by $A_u(Q)$ and its CQG subalgebras, for example because the fusion ring is freely generated (as a ring) by the fundamental representation and its dual (\cite[Th\'eor\`eme 1 (ii)]{MR1484551}), which immediately shows that its only invertible elements are $\pm 1$.   
\end{re}


\section{Statements and proofs}\label{se.main}

Let us restate the result announced in the introduction:

\begin{theorem}\label{th.main}
For any positive invertible matrix $Q$, the compact quantum group $P_u(Q)$ is simple in the sense of \Cref{def.simple}.
\end{theorem}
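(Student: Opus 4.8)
The plan is to turn the statement into a combinatorial one about the fusion semiring $R_+$ of $A=A_u(Q)$, whose structure is recorded in \Cref{eq.fusion}, and then to settle it by a descent followed by an induction. Take $n\ge 2$ (for $n=1$, $A\cong C(\bS^1)$ and $P_u(Q)\cong\mathbb C$ is trivially simple). Write $\mathbb G$ for the compact quantum group with $A=A_u(Q)$ and $Z$ for its central circle subgroup from \Cref{ex.center}, so $P_u(Q)=\mathcal O(\mathbb G/Z)$. Put $\beta:=\alpha^*$, and call a word in $\bN*\bN$ \emph{balanced} if it has equally many $\alpha$'s and $\beta$'s; these are exactly the words $w$ for which $r_w$ becomes trivial as a $C(\bS^1)$-comodule along the map of \Cref{ex.center}, so the inclusion $\iota:P_u(Q)\hookrightarrow A$ is the sum $\bigoplus C_{r_w}$ over balanced $w$. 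By \Cref{rem.normal} a normal quantum subgroup is the same as an ad-invariant CQG subalgebra, and since the normal subgroups of a quotient $\mathbb G/Z$ are precisely the normal subgroups of $\mathbb G$ containing $Z$, the normal quantum subgroups of $P_u(Q)$ correspond bijectively to ad-invariant CQG subalgebras of $A$ that lie inside $P_u(Q)$. By cosemisimplicity such a subalgebra is $\bigoplus_{w\in T}C_{r_w}$ for a set $T$ of balanced words, and the requirements become: $e\in T$; $T=T^*$; $T$ is fusion-closed (all simple constituents of $r_xr_y$ are in $T$ when $x,y\in T$); and $T$ is ad-closed (all simple constituents of $r_yr_xr_{y^*}$ are in $T$ when $x\in T$ and $y\in\bN*\bN$ is arbitrary). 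The goal is then: the only such $T$ are $\{e\}$ and the set $\mathcal B$ of all balanced words.

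The moves I would read off \Cref{eq.fusion}, each by tracking word lengths, are: (i) $r_xr_y$ has $r_{xy}$ as its unique longest constituent, with multiplicity one — so fusion concatenates members of $T$; (ii) $r_\alpha r_wr_\beta$ has $r_{\alpha w\beta}$ as its unique longest constituent, with multiplicity one, giving $w\in T\Rightarrow\alpha w\beta,\ \beta w\alpha\in T$ — here the conjugator is $\alpha$, whose comodule does not live in $P_u(Q)$, which is exactly why one has to argue in $A$ and not in $P_u(Q)$; (iii) $r_\beta r_{\alpha v\alpha}r_\alpha=r_{v\alpha\alpha}+r_{\beta\alpha v\alpha\alpha}$, a ``cyclic rotation'' $\alpha v\alpha\in T\Rightarrow v\alpha\alpha\in T$ with the evident variants; (iv) $r_\beta r_{\alpha u\beta}r_\alpha$ contains $r_u$ with multiplicity one, a ``peeling'' $\alpha u\beta\in T\Rightarrow u\in T$ with variants.

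For the descent, suppose $T\ne\{e\}$ and let $w\in T\setminus\{e\}$ have minimal (necessarily even) length $m$; I claim $m=2$. First, no length-$m$ element of $T$ can have mixed endpoints, since peeling (iv) such an element yields a non-empty member of $T$ of length $m-2$, against minimality. Hence $w$ has equal endpoints, say $w=\alpha v\alpha$ (the other case symmetric), where $\deg v=-2$ so $v$ contains a $\beta$; rotations (iii) then force $v$ to begin and end with $\alpha$ (else a rotation of $w$ has mixed endpoints), so $w=\alpha^{p}M\alpha^{q}$ with $p,q\ge2$ and $M$ a non-empty word beginning and ending with $\beta$. Rotating the leading block of $\alpha$'s one letter at a time to the far end — each step legitimate because the word still begins with $\alpha$ — lands on the length-$m$ word $M\alpha^{p+q}\in T$, which begins with $\beta$ and ends with $\alpha$: contradiction. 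So $m=2$, $T$ meets $\{\alpha\beta,\beta\alpha\}$, and one rotation puts both $\alpha\beta$ and $\beta\alpha$ into $T$. I expect this descent to be the main obstacle — in particular the equal-endpoints case, where there is no direct way to shrink $w$ and one must instead rotate until a mixed-endpoint word surfaces and only then peel.

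The induction then shows $\{\alpha\beta,\beta\alpha\}\subseteq T$ forces $\mathcal B\subseteq T$. With lengths $\le 2$ in hand, take a balanced $w$ of length $\ge4$ and read it as a $\pm1$ lattice path from $0$ to $0$ ($\alpha$ up, $\beta$ down). If the path meets $0$ internally, then $w=w'w''$ with $w',w''$ non-empty, balanced and shorter, both in $T$ by induction, so $w\in T$ by concatenation (i). Otherwise the path stays strictly on one side of $0$, forcing $w=\alpha u\beta$ or $w=\beta u\alpha$ with $u$ balanced and shorter, so $u\in T$ by induction and $w\in T$ by (ii). This closes the induction, giving $T=\mathcal B$; combined with the descent, $T\in\{\{e\},\mathcal B\}$, i.e. $P_u(Q)$ is simple. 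Since everything rests only on \Cref{eq.fusion}, which does not depend on $Q$, the argument is uniform in $Q$.
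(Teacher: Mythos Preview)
Your reduction to a combinatorial problem about sets $T$ of balanced words is the right idea, and the descent-plus-lattice-path induction is clean. But there is a genuine gap at the point where you translate ad-invariance of $C$ into the combinatorial condition ``all simple constituents of $r_y r_x r_{y^*}$ lie in $T$ whenever $x\in T$.'' This is simply not what ad-invariance says, and it is false for CQG algebras in general. Take $A=C(S_3)$ with the normal subgroup $A_3$: the corresponding $C$ has $T=\{\mathbf 1,\varepsilon\}$, yet $\rho\otimes\varepsilon\otimes\rho^*\cong\mathbf 1\oplus\varepsilon\oplus\rho$ contains $\rho\notin T$. What ad-invariance actually gives you is that for nonzero $p\in C_{r_y}$ and $c\in C_{r_x}$ the element $p_1 c S(p_2)$ lies in $C$; but there is no reason its component in each simple summand of $C_{r_y}C_{r_x}C_{r_{y^*}}$ should be nonzero, so you cannot conclude that every such summand belongs to $T$. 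Your moves (ii), (iii), (iv) all rest on exactly this unjustified inference. (Whether ad-invariance admits a purely fusion-theoretic characterization is in fact left open in the paper.)

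The paper's proof sidesteps this entirely by only ever conjugating in situations where $r_y r_x r_{y^*}$ is a \emph{single} simple: then the multiplication map $C_{r_y}\otimes C_{r_x}\otimes C_{r_{y^*}}\to C_{r_{yxy^*}}$ is an isomorphism, so $p_1 c S(p_2)$ is automatically nonzero and lands in one simple coalgebra, forcing $yxy^*\in T$. Concretely, the paper conjugates $r_{10}$ (which is already in $P_u(Q)$, so no passage to $A$ is needed) by $r_{0^n1^n}$, and since neither junction cancels, the triple product is the single word $0^n1^n\,10\,0^n1^n$. That yields all $r_{0^n1^n}$ and $r_{1^n0^n}$, after which a short induction on block structure finishes. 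Your conjugation by $r_\alpha$ is more flexible but produces multi-term products, and that is precisely where the argument breaks. A secondary issue: you assert without proof that ad-invariance in $P_u(Q)$ is equivalent to ad-invariance in $A$; the paper avoids this too by conjugating only with elements of $P_u(Q)$.
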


The proof consists of showing that whenever a CQG subalgebra $\iota:C\to P=P_u(Q)$ fits into an exact sequence as explained in \Cref{subse.normal}, $C$ is either the scalars or the entire $P$. Since inclusions of CQG subalgebras induce inclusions of fusion semirings, we can attack the problem by limiting the possibilities for a fusion semiring of $R_+=R_+(P)$ if it is to correspond to such an embedding $\iota$. 

In working with fusion semirings, all of which are embedded in that of $A=A_u(Q)$ described in \Cref{subse.free}, we will identify the free monoid $\bN *\bN$ (and hence the set of simple comodules of $A$) with words in the generators $\alpha$ and $\alpha^*$. It will be less cumbersome, notationally, to substitute $0$ and $1$ respectively for $\alpha$ and $\alpha^*$, and write the elements of the free monoid as binary words; note that the `$*$' involution changes $0$'s into $1$'s and vice versa. First, we determine precisely which binary words correspond to simple comodules of $P_u(Q)$.

\begin{lemma}\label{lem.balanced}
The simple comodules of $P$ are parametrized by the binary words with equal numbers of $0$'s and $1$'s. 
\end{lemma}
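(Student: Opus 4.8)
The plan is to realize $P=P_u(Q)$ as the degree-zero component of a $\bZ$-grading on $A=A_u(Q)$, and then to check that the simple subcoalgebra $C_x\subseteq A$ attached to a binary word $x$ lies entirely in the graded component indexed by $d(x):=(\text{number of }0\text{'s in }x)-(\text{number of }1\text{'s in }x)$; the lemma follows at once from these two facts.

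First I would set up the grading. By \Cref{ex.center} the central circle is the Hopf $*$-algebra surjection $\pi\colon A\to\mathbb{C}[t,t^{-1}]$, $u_{ij}\mapsto\delta_{ij}t$, onto the group algebra of $\bZ$, and $P=\lker(\pi)$. The map $(\pi\otimes\id)\Delta\colon A\to\mathbb{C}[\bZ]\otimes A$ is a left $\mathbb{C}[\bZ]$-comodule structure, hence a $\bZ$-grading $A=\bigoplus_{d\in\bZ}A_d$ with $A_d=\{a\in A\mid(\pi\otimes\id)\Delta(a)=t^d\otimes a\}$; because $\pi$ is a $*$-map and $t^*=t^{-1}$ this is a grading of $*$-algebras with $u_{ij}\in A_1$ and $u_{ij}^*\in A_{-1}$. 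Comparing $A_0$ with the definition of $\lker$ recalled in \Cref{subse.normal} shows $P=\lker(\pi)=A_0$.

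Next I would pin down which component contains $C_x$. For a binary word $x=s_1\cdots s_k$ with each $s_i\in\{0,1\}$, an easy induction on $k$ built on the fusion rule \cref{eq.fusion} shows that $r_x$ occurs with coefficient $1$ in the product $r_{s_1}\cdots r_{s_k}$ and is the unique summand there of length $k$; indeed, in any product $r_yr_z$ of simple classes the summand indexed by the concatenation $yz$ (the trivial splitting with empty middle piece) is the only one of length $|y|+|z|$, and it appears with coefficient $1$. By cosemisimplicity this makes $V_x$ a direct summand of $V_{s_1}\otimes\cdots\otimes V_{s_k}$, so the coalgebra--comodule correspondence of \Cref{se.prel} gives $C_x\subseteq C_{s_1}\cdots C_{s_k}$. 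Since $C_0$ is spanned by the $u_{ij}$ and $C_1=S(C_0)$ is spanned by the $u_{ij}^*$, each $C_{s_i}$ sits in $A_1$ or $A_{-1}$ according as $s_i$ is $0$ or $1$; hence $C_{s_1}\cdots C_{s_k}\subseteq A_{d(x)}$ and therefore $C_x\subseteq A_{d(x)}$.

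Combining the two halves, $C_x\subseteq P=A_0$ exactly when $d(x)=0$, i.e.\ exactly when $x$ has equally many $0$'s and $1$'s. Finally, $A$ and hence its subcoalgebra $P$ are cosemisimple, and a subcoalgebra of $P$ is simple in $P$ if and only if it is simple in $A$; so the simple subcoalgebras of $P$ are precisely the $C_x$ with $x$ balanced, which under the bijection between simple comodules and simple subcoalgebras recalled in \Cref{se.prel} is exactly the assertion of the lemma. I do not anticipate a serious obstacle: the two things to get right are the identification $P=A_0$ and the homogeneity of $C_x$ (so that $C_x$ does not spread across several graded components), and the latter is precisely what the induction through \cref{eq.fusion} delivers.
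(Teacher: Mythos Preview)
Your argument is correct and is essentially the paper's own proof, carried out in more detail: the paper identifies the simple $P$-comodules as those $A$-simples whose corestriction along $\pi\colon A\to C(\bS^1)$ is a sum of trivials and asserts that the corestriction of $V_w$ lands in degree $(\#0)-(\#1)$, which is exactly your identification $P=A_0$ together with $C_x\subseteq A_{d(x)}$. The only difference is that you supply the induction through \cref{eq.fusion} to justify the homogeneity of $C_x$, whereas the paper leaves this as an observation.
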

\begin{proof}
Since by definition $P$ is the quotient of the free unitary group by the circle subgroup $A\to C(\bS^1)$, its simple comodules are precisely those which, when regarded as comodules over $C(\bS^1)$, break up as direct sums of the trivial comodule. The conclusion follows from the observations that (a) simple representations over the circle group are parametrized by $\bZ$, and (b) under this identification, scalar corestriction via $A\to C(\bS^1)$ turns the simple corresponding to a binary word $w$ into a direct sum of copies of the simple $C(\bS^1)$-comodule corresponding to the integer $(\sharp\text{ of }0\text{'s in }w)-(\sharp\text{ of }1\text{'s in }w)$.  
\end{proof}

We refer to binary words with equal numbers of $0$'s and $1$'s as \define{balanced}.

\begin{re}\label{rem.not_fingen}
Using the fusion rules \Cref{eq.fusion}, it can be shown that given any finite set $S$ of binary words, no simple in the semiring generated by $S$ can start with a longer contiguous segment of $0$'s than the longer such segment in a member of $S$. Together with \Cref{lem.balanced}, this shows that as noted in \Cref{rem.fingen}, $P$ is not finitely generated. 
\end{re}

We now start working towards proving \Cref{th.main}.

\begin{lemma}\label{lem.0110}
If $\iota:C\to P$ is ad-invariant and strictly larger than the scalars, then both $r_{01}$ and $r_{10}$ are in the fusion semiring $R_+(C)\subseteq R_+$ of $C$. 
\end{lemma}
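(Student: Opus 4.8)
The plan is to pass to the fusion semiring $R_+=R_+(A)$ and argue purely combinatorially, exploiting two structural features of $R_+(C)\subseteq R_+$: it is closed under the involution $*$ (since $C$ is a Hopf $*$-subalgebra), and it is spanned by a subset of the basis $\{r_w\}$ — indeed $C\hookrightarrow P$ keeps simple $C$-comodules simple as $P$-comodules, so $C_w\subseteq C$ whenever $V_w$ occurs as a constituent of a tensor product of comodules whose coalgebras lie in $C$ (because then $C_w\subseteq C_{y_1}\cdots C_{y_k}\subseteq C$). In particular, if some element of $R_+(C)$ has $r_w$ as a summand, then $r_w\in R_+(C)$.

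The first step isolates a length-two word. Since $C$ is strictly larger than the scalars, $R_+(C)$ contains $r_w$ for some nonempty balanced $w$; writing $w=w_1v$ with $w_1\in\{0,1\}$ and noting $w^*=v^*w_1^*$, the fusion rule \Cref{eq.fusion} shows $r_{w_1w_1^*}$ occurs in $r_wr_{w^*}$, so $r_{01}\in R_+(C)$ or $r_{10}\in R_+(C)$. It then suffices to show that each membership forces the other; suppose $r_{01}\in R_+(C)$, i.e. $C_{01}\subseteq C$. Here ad-invariance enters: the span $E$ of all $S(p_1)qp_2$ with $p\in C_{10}\subseteq P$ and $q\in C_{01}\subseteq C$ is contained in $C$. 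On one hand, since $(10)^*=10$ we get $E\subseteq S(C_{10})C_{01}C_{10}=C_{10}C_{01}C_{10}=C_{V_{10}\otimes V_{01}\otimes V_{10}}$, and a short computation with \Cref{eq.fusion} gives $r_{10}r_{01}r_{10}=r_{100110}$, a single basis element, so this coalgebra is the \emph{simple} coalgebra $C_{100110}$. On the other hand $E\neq 0$, because $\varepsilon(S(p_1)qp_2)=\varepsilon(p)\varepsilon(q)$ and $\varepsilon$ is not identically zero on $C_{10}$ or on $C_{01}$. Hence $C\cap C_{100110}$ is a nonzero subcoalgebra of the simple coalgebra $C_{100110}$, so it is all of it; thus $r_{100110}\in R_+(C)$, and since $100110$ begins with $1$, the first step applied to $w=100110$ yields $r_{10}\in R_+(C)$. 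The reverse implication is entirely symmetric, via the adjoint action of $C_{01}$ on $C_{10}$ and the identity $r_{01}r_{10}r_{01}=r_{011001}$ (again one basis element, beginning with $0$).

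The step I expect to be the crux — and where ad-invariance is genuinely used — is the passage from $r_{01}\in R_+(C)$ to $r_{100110}\in R_+(C)$. Ad-invariance does \emph{not} hand us stability of $R_+(C)$ under $[x]\mapsto[x'][x][x']^*$ for an arbitrary simple $x'$ of $P$: the span of the actual adjoint-action elements is typically a proper subspace of $C_{x'^*\otimes x\otimes x'}$, and $R_+(C)$ is a priori only closed under products with its own elements. The point is to choose $x'$ (namely $10$, respectively $01$) for which $V_{x'^*}\otimes V_{01}\otimes V_{x'}$ is \emph{irreducible}: then the ambient coalgebra is simple, and mere non-vanishing of the adjoint-action span already forces the whole simple coalgebra into $C$. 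Recognizing that such an $x'$ exists — and verifying the irreducibility through the explicit fusion rules — is the real content; the rest is bookkeeping with binary words.
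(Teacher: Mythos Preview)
Your argument is correct and follows essentially the same route as the paper's own proof: isolate $r_{01}$ or $r_{10}$ from $r_w r_{w^*}$, then use ad-invariance together with the key irreducibility $r_{10}r_{01}r_{10}=r_{100110}$ to force the other one. The only cosmetic differences are that you use the right adjoint action where the paper uses the left (both land in $C_{10}C_{01}C_{10}$ since $(10)^*=10$), and your non-vanishing argument via $\varepsilon(S(p_1)qp_2)=\varepsilon(p)\varepsilon(q)$ is a clean alternative to the paper's observation that the multiplication map $C_{10}\otimes C_{01}\otimes C_{10}\to C_{100110}$ is an isomorphism.
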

\begin{proof}
By assumption, $C$ has some non-trivial simple comodule, whose corresponding binary word we may as well assume starts with a zero: $r_{0x}$. But then its dual will be $r_{x^*1}$, and it follows from the fusion rules \Cref{eq.fusion} that $r_{01}$ is a summand in the product $r_{0x}r_{x^*1}$. In conclusion, the simple $r_{01}$ must be in $R_+(C)$.

We now have to prove the same about $r_{10}$, and this is where ad-invariance comes into the picture. First, the argument in the previous paragraph solves the problem as soon as we can show that some $r_{1\cdots}$ is in $R_+(C)$ (then multiply it with its dual $r_{\cdots 0}$, etc.). To this end, fix some non-zero $p\in P$ in the coalgebra $C(r_{10})$, and let it act on a non-zero $c\in C$ in the coalgebra $C(r_{01})$ via the left adjoint action. On the one hand, the result must be in $C$ by ad-invariance. On the other, the fusion rules \Cref{eq.fusion} say that $r_{10}r_{01}r_{10}$ equals $r_{100110}$, and hence the multiplication map $C_{r_{10}}\otimes C_{r_{01}}\otimes C_{r_{10}}\to C_{r_{100110}}$ is an isomorphism. It follows from this that $p\triangleright c=p_1cS(p_2)$ is a non-zero element of $C_{r_{100110}}$ (note that $C_{r_{10}}$ is fixed by the antipode, so it contains both $p_1$ and $S(p_2)$). All in all, we get $r_{100110}\in R_+(C)$, and as observed at the beginning of this paragraph, this will do to finish the proof. 
\end{proof}

In the above proof and elsewhere we are tacitly using the correspondence between CQG subalgebras of a CQG algebra $A$ and sub-semirings of its fusion semiring $R_+(A)$. In one direction, an inclusion $\iota:C\to A$ of a CQG subalgebra induces a fully faithful monoidal functor between categories of comodules, and hence a fusion semiring inclusion. Moreover, $R_+(C)$ is the $\bN$-span of precisely those simples in $R_+(A)$ which correspond to simple subcoalgebras of $C$. In the other direction, given a sub-semiring $R_+$ of $R_+(A)$ which is an $\bN$-span of simples and is closed under the involution, the direct sum of simple subcoalgebras of $A$ corresponding to the simples in $R_+$ is a CQG subalgebra. These two constructions are inverse to one another, and implement the correspondence.

Let us stop for a moment to record the fact that we now already have an example of simple compact quantum group with non-commutative fusion semiring: The proof of \Cref{lem.0110} only uses $r_{01}$ and $r_{10}$ and the fusion rules of $A_u(Q)$, so it provides a proof for

\begin{proposition}\label{prop.main}
For any positive invertible matrix $Q$, the compact quantum group whose underlying CQG algebra is the subalgebra of $A_u(Q)$ generated by $u_{ij}u_{kl}^*$ and $u_{ij}^*u_{kl}$ is simple. \qedhere
\end{proposition}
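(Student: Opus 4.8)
The plan is to identify the CQG subalgebra described in the statement with an explicit member of the correspondence between CQG subalgebras and involution-closed $\bN$-spans of simples in $R_+ = R_+(A_u(Q))$, and then to show that this subalgebra is ad-invariant and strictly larger than the scalars, so that \Cref{lem.0110} applies. First I would observe that the elements $u_{ij}u_{kl}^*$ and $u_{ij}^*u_{kl}$ are precisely matrix coefficients of the comodules $V\otimes V^*$ and $V^*\otimes V$, whose decompositions into simples are governed by the fusion rules \Cref{eq.fusion}: in binary-word notation, $r_0 r_1 = r_{01} + r_\emptyset$ and $r_1 r_0 = r_{10} + r_\emptyset$. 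Hence the subalgebra $C$ generated by these products is exactly the CQG subalgebra corresponding to the sub-semiring of $R_+$ generated by $r_{01}$ and $r_{10}$ (it is automatically involution-closed, since $(01)^* = 10$), and in particular $C$ contains the nontrivial simple $r_{01}$, so it is strictly larger than the scalars.

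The crucial point is ad-invariance, i.e. that $C$ is closed under the (left or right) adjoint action of $A_u(Q)$ on itself, so that by \Cref{rem.normal} it fits into an exact sequence and \Cref{lem.0110} is applicable. Here I would argue as follows: the sub-semiring $R_+(C)$ generated by $r_{01}$ and $r_{10}$ consists, by repeated application of \Cref{eq.fusion}, exactly of (the $\bN$-spans of) the balanced binary words — the same semiring that \Cref{lem.balanced} attaches to $P_u(Q)$. Indeed $r_{01}$ and $r_{10}$ are themselves balanced, balancedness is preserved under the products appearing in \Cref{eq.fusion}, and conversely one checks that every balanced word is obtainable. Therefore $C$ coincides with the CQG subalgebra $P = P_u(Q)$ of $A_u(Q)$, which is ad-invariant because it is the Hopf-subalgebra kernel $\lker(\pi) = \rker(\pi)$ of the central circle quotient $\pi : A_u(Q) \to C(\bS^1)$ (see \Cref{ex.center} and \Cref{rem.normal}). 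Once $C = P$ is known to be ad-invariant and properly contains the scalars, \Cref{lem.0110} gives $r_{01}, r_{10} \in R_+(C)$, and then the argument of \Cref{lem.0110} — which manufactures ever larger simples purely from $r_{01}$, $r_{10}$ and the fusion rules — together with \Cref{lem.balanced} shows that $R_+(C)$ is all of $R_+(P)$, i.e. $C = P$ has no normal quantum subgroups beyond $\varepsilon$ and $\id$.

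I expect the main obstacle to be the bookkeeping in the claim that the semiring generated by $r_{01}$ and $r_{10}$ is precisely the set of balanced words — in particular the ``conversely'' direction, showing every balanced word arises. This is a purely combinatorial statement about the free monoid $\bN * \bN$ under the cancellation-type rule \Cref{eq.fusion}, and the cleanest route is probably an induction on word length: given a balanced word $w$ of length $2m$, write it as a product of a balanced prefix and balanced suffix when possible, and otherwise factor $w$ through a controlled expression in $r_{01}$, $r_{10}$ chosen so that \Cref{eq.fusion} produces $w$ as the unique maximal-length summand (as in the $r_{10}r_{01}r_{10} = r_{100110}$ computation already used in the proof of \Cref{lem.0110}). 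Everything else — recognizing the generators as matrix coefficients of $V \otimes V^*$ and $V^* \otimes V$, and invoking the established dictionary between subalgebras and sub-semirings — is routine given the machinery recalled in \Cref{se.prel}.
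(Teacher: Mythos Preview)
Your central claim---that the CQG subalgebra $C$ generated by $u_{ij}u_{kl}^*$ and $u_{ij}^*u_{kl}$ coincides with $P_u(Q)$---is false, and this collapses the whole strategy. The sub-semiring of $R_+$ generated by $r_{01}$ and $r_{10}$ is strictly smaller than the set of all balanced words: by \Cref{rem.not_fingen}, no simple in the semiring generated by $\{r_{01},r_{10}\}$ can begin with more than one consecutive $0$, so for instance $r_{0011}$ is not in $R_+(C)$. (You can also see this directly: every summand of any product of $r_{01}$'s and $r_{10}$'s begins with a single $0$ or a single $1$.) This is precisely the point of the paper's distinction between $C$ and $P_u(Q)$: $C$ is finitely generated by construction, while $P_u(Q)$ is not. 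So the ``conversely'' step you flagged as the main obstacle does not merely require careful bookkeeping---it is simply wrong.

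There is a second, independent gap: even if $C=P$ held, your argument would not establish simplicity. Simplicity of $C$ means that every ad-invariant CQG subalgebra $D\subseteq C$ is either the scalars or all of $C$; showing that $C$ itself is ad-invariant in $A_u(Q)$ and then applying \Cref{lem.0110} to $C\subset P$ only recovers $r_{01},r_{10}\in R_+(C)$, which you already knew. What the paper actually does is much more direct: since $C_{r_{01}}$ and $C_{r_{10}}$ lie in $C$, the entire adjoint-action computation in the proof of \Cref{lem.0110} can be carried out \emph{inside} $C$. Thus for any ad-invariant $D\subseteq C$ strictly larger than the scalars, one obtains $r_{01},r_{10}\in R_+(D)$; but these generate $R_+(C)$, so $D=C$. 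That is the one-line proof the paper has in mind.
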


As promised in \Cref{rem.fingen}, this example, unlike $P_u(Q)$, is finitely generated as an algebra and hence conforms to all of the conditions for simplicity in \cite[3.3]{MR2504527}. Apart from commutativity of the fusion semiring, this example felicitously lacks another property. It is observed in \cite[$\S$5]{MR2504527} that the examples of simple compact quantum groups mentioned in the introduction have what in that paper is called \define{property F}. It means that every embedding of a CQG subalgebra is part of an exact sequence, or, in view of \Cref{rem.normal}, every CQG subalgebra is invariant under the left adjoint action. The CQG algebra of \Cref{prop.main} clearly does not have property F, as its CQG subalgebra generated by $u_{ij}u_{kl}^*$, whose fusion semiring is generated by $r_{01}$, is not ad-invariant by the proof of \Cref{lem.0110}.

\begin{lemma}\label{lem.00111100}
Keeping the hypotheses and notation of \Cref{lem.0110}, all simples of the form $r_{0\cdots 01\cdots 1}$ and $r_{1\cdots 10\cdots 0}$ belong to $R_+(C)$. 
\end{lemma}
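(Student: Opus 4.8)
The plan is an induction on the half-length $m\ge1$, showing at each stage that \emph{both} $r_{0^m1^m}$ and $r_{1^m0^m}$ lie in $R_+(C)$; since the balanced words of the two shapes in the statement are exactly the $0^m1^m$ and the $1^m0^m$, this is what is wanted. The case $m=1$ is \Cref{lem.0110}. The reason one cannot simply work inside the sub-semiring generated by $r_{01}$ and $r_{10}$ is \Cref{rem.not_fingen}: products and duals never produce a simple whose word begins with a longer run of $0$'s than those already present, so the only mechanism for lengthening runs is the ad-invariance of $C$, and that mechanism must be fed something coming out of the previous inductive stage.

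For the step, suppose $r_{0^m1^m}\in R_+(C)$, so $C_{r_{0^m1^m}}\subseteq C$. I would conjugate this subcoalgebra by the balanced word $0^{m+1}1^{m+1}$. By \Cref{lem.balanced} one has $C_{r_{0^{m+1}1^{m+1}}}\subseteq P$ — the point being only that this subcoalgebra is a legitimate reservoir of elements of $P$ to act with; we certainly do not yet know it sits inside $C$ — and since $(0^{m+1}1^{m+1})^*=0^{m+1}1^{m+1}$ the antipode carries $C_{r_{0^{m+1}1^{m+1}}}$ into itself. Thus, for $p\in C_{r_{0^{m+1}1^{m+1}}}$ and $c\in C_{r_{0^m1^m}}$, the element $p\triangleright c=p_1cS(p_2)$ belongs to $C$ by ad-invariance (\Cref{rem.normal}) while also belonging to $C_{r_{0^{m+1}1^{m+1}}}\,C_{r_{0^m1^m}}\,C_{r_{0^{m+1}1^{m+1}}}=C_{r_{0^{m+1}1^{m+1}}\otimes r_{0^m1^m}\otimes r_{0^{m+1}1^{m+1}}}$.

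Next I would check, from the fusion rules \Cref{eq.fusion}, that $r_{0^{m+1}1^{m+1}}$ is among the simple constituents of $r_{0^{m+1}1^{m+1}}\otimes r_{0^m1^m}\otimes r_{0^{m+1}1^{m+1}}$. Taking $g=1^m$ in \Cref{eq.fusion} (so that $0^{m+1}1^{m+1}=(0^{m+1}1)\cdot1^m$ and $0^m1^m=0^m\cdot1^m$, with $g^*=0^m$) exhibits $r_{0^{m+1}1^{m+1}}$ inside $r_{0^{m+1}1^{m+1}}\,r_{0^m1^m}$; and taking $g=1^{m+1}$ (so that $0^{m+1}1^{m+1}=0^{m+1}\cdot1^{m+1}$, with $g^*=0^{m+1}$) exhibits $r_{0^{m+1}1^{m+1}}$ inside $r_{0^{m+1}1^{m+1}}\,r_{0^{m+1}1^{m+1}}$. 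Hence $C_{r_{0^{m+1}1^{m+1}}}$ is one of the simple subcoalgebras occurring in $C_{r_{0^{m+1}1^{m+1}}\otimes r_{0^m1^m}\otimes r_{0^{m+1}1^{m+1}}}$. Once one knows that the elements $p\triangleright c$ have, for some $p$ and $c$, a nonzero component in that subcoalgebra, we get $C\cap C_{r_{0^{m+1}1^{m+1}}}\ne0$, whence $C_{r_{0^{m+1}1^{m+1}}}\subseteq C$ by cosemisimplicity of $C$, i.e. $r_{0^{m+1}1^{m+1}}\in R_+(C)$. The case of $r_{1^{m+1}0^{m+1}}$ is the mirror image, conjugating $r_{1^m0^m}$ by $r_{1^{m+1}0^{m+1}}$.

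The hard part is exactly showing that the adjoint-action elements $p\triangleright c$ really do have a nonzero $C_{r_{0^{m+1}1^{m+1}}}$-component for suitable $p,c$. In \Cref{lem.0110} the triple tensor product was a single simple, so the multiplication map was an isomorphism of coalgebras and \emph{any} nonzero $p\triangleright c$ — such exist, since $\Delta$ is injective and $S$ bijective — automatically landed in the right place; here the triple product splits into many simples, and one must know that the map $C_{r_y}\otimes C_{r_x}\to P$, $p\otimes c\mapsto p_1cS(p_2)$, has image meeting $C_{r_z}$ for \emph{every} simple $r_z\le r_y\otimes r_x\otimes r_{y^*}$ — equivalently, that no isotypic projection $\pi_z$ annihilates this map. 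I would isolate this as a separate lemma and prove it by a direct matrix-coefficient computation: expanding $p$ and $c$ in the standard generators of $C_{r_y}$ and $C_{r_x}$, the $C_{r_z}$-component of $p\triangleright c$ is governed by a fixed intertwiner realizing $r_z$ inside $r_y\otimes r_x\otimes r_{y^*}$, and a non-degeneracy argument shows it cannot vanish identically. This realization statement is the real content; granting it, the fusion bookkeeping above and the appeal to ad-invariance are routine. (If one could instead exhibit a balanced $y$ for which $r_y\otimes r_{0^m1^m}\otimes r_{y^*}$ is already the single simple $r_{0^{m+1}1^{m+1}}$, the realization lemma would be avoidable; but the requirement that $y$ be balanced appears to prevent this, so I expect the realization lemma to be unavoidable.)
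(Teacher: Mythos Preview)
Your proof has a genuine gap at exactly the point you flag: the ``realization lemma'' asserting that $p\triangleright c$ meets every isotypic component of $r_y\otimes r_x\otimes r_{y^*}$ is unproven, and your matrix-coefficient sketch is not a proof. The lemma is in fact false as stated (take $x$ trivial: then $p\triangleright 1=\varepsilon(p)\cdot 1$ lands only in the scalars, yet $r_y\otimes r_{y^*}$ generally has many simple summands), so at the very least some hypothesis on $x$ is required and the argument would need real work.

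The paper sidesteps this entirely, and does not proceed by induction. Two moves replace your realization lemma. First, a reduction: to obtain $r_{0^n1^n}\in R_+(C)$ it suffices to find \emph{any} $r_{0^nx}\in R_+(C)$, since then its dual $r_{x^*1^n}$ is also there, and taking $g=x$ in \Cref{eq.fusion} exhibits $r_{0^n1^n}$ as a summand of $r_{0^nx}\cdot r_{x^*1^n}$. Second, the right conjugation: act with $p\in C_{r_{0^n1^n}}$ on a nonzero $c\in C_{r_{10}}$ (available from \Cref{lem.0110}), rather than on $C_{r_{0^{n-1}1^{n-1}}}$. For $n\ge 2$ one checks directly from \Cref{eq.fusion} that $r_{0^n1^n}\cdot r_{10}\cdot r_{0^n1^n}$ is the \emph{single} simple $r_{0^n1^n100^n1^n}$: in each of the two products the only admissible $g$ is the empty word. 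Hence the multiplication map $C_{r_{0^n1^n}}\otimes C_{r_{10}}\otimes C_{r_{0^n1^n}}\to C_{r_{0^n1^n100^n1^n}}$ is an isomorphism exactly as in the proof of \Cref{lem.0110}, the element $p\triangleright c$ is automatically nonzero, and $r_{0^n1^n100^n1^n}\in R_+(C)$. This word begins with $n$ zeros, so the reduction applies.

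Your instinct that no balanced $y$ yields a single-simple triple product landing directly on $r_{0^{m+1}1^{m+1}}$ is correct; the paper's point is that one need not land there. Any balanced word with a leading run of $n$ zeros suffices, and conjugating the short word $r_{10}$ by the long word $r_{0^n1^n}$ (rather than a long word by a longer one) is precisely what keeps the triple product irreducible.
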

\begin{proof}
Let us prove the statement for $r_{0\cdots 01\cdots 1}$ ($n$ $0$'s and also $n$ $1$'s, since the binary word must be balanced). First, note that it is enough to show that some simple of the form $r_{0\cdots 0x}$ ($n$ $0$'s) is in $R_+(C)$. Indeed, its dual would correspond to the word $x^*$ followed by $n$ $1$'s, and their product would contain the desired simple as a summand. We know from \Cref{lem.0110} that $r_{10}$ is in $R_+(C)$, and now we can argue as in the proof of that lemma, acting on a non-zero element of $C_{r_{10}}$ by a non-zero element of $C_{r_{0\cdots 01\cdots 1}}$ via the left adjoint action to conclude.
\end{proof}

\begin{proof of main}
Keeping the notations and assumptions of \Cref{lem.0110}, we show by induction on the length of a balanced word $x\in\bN *\bN$ that $r_x$ belongs to $R_+(C)$. Having taken care of the base step of the induction in \Cref{lem.0110}, we can assume $x$ is at least four symbols long. There are two possibilities:

(1) $x$ consists of only two contiguous blocks of symbols, one of $0$'s and one of $1$'s, as in, say, $x=0\cdots 01\cdots 1$. This case is covered by \Cref{lem.00111100}.

(2) $x$ consists of more than two contiguous blocks of symbols. Write it without loss of generality as, say, $x=0\cdots 01y$, starting with $n\ge 1$ $0$'s. Then, by the fusion rules \Cref{eq.fusion}, $x$ is a summand in the product of $r_{0\cdots 01\cdots 1}$ ($n$ $0$'s) and $r_{0\cdots 0y}$ ($n-1$ $0$'s). These two words are both strictly shorter than $x$, so the induction hypothesis implies that both simples are in $R_+(C)$. This finishes the proof. 
\end{proof of main}

While these results partially address \cite[Problem 4.6 (2)]{MR2504527}, which asks for examples of simple compact quantum groups with non-commutative fusion semirings, it will still be interesting to investigate how much further \Cref{prop.main} can be pushed, and hence make some progress towards the classification of simple \define{quotients} of free unitary groups. Note that here `quotient quantum group' is used in the weak sense, meaning simply `CQG subalgebra'. Quotients in the stronger sense of left hand halves of exact sequences are taken care of by the following consequence of \Cref{th.main} (or rather of its proof):

\begin{corollary}\label{co.main}
Any proper normal quantum subgroup of $A=A_u(Q)$ is contained in the circle subgroup $A\to C(\bS^1)$ from \Cref{ex.center}.  
\end{corollary}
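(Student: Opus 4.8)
The plan is to reduce the statement to \Cref{th.main} by intersecting with $P=P_u(Q)$. Let $\pi:A\to B$ be a proper normal quantum subgroup and set $C=\lker(\pi)=\rker(\pi)\subseteq A$; by \Cref{rem.normal} this $C$ is a CQG subalgebra invariant under the left and right adjoint actions of $A$ on itself, and $\pi\neq\id_A$ means precisely that $C$ is strictly larger than the scalars. It will suffice to prove that $C\supseteq P$: once that is known, the inclusion of ideals $AC^+\supseteq AP^+$ exhibits $\pi:A\to B=A/AC^+$ as a quotient of $A\to A/AP^+=C(\bS^1)$, which is exactly the assertion that $\pi$ is contained in the circle subgroup of \Cref{ex.center}.

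First I would check that $R_+(C)$ contains a non-trivial balanced word. Pick any non-trivial simple $r_w\in R_+(C)$. Since $C$ is closed under duals, $r_{w^*}$ also lies in $R_+(C)$, and by the fusion rules \Cref{eq.fusion} the simple $r_{ww^*}$ indexed by the concatenation $ww^*$ is a summand of $r_wr_{w^*}$. Because the involution interchanges $0$'s and $1$'s, the word $ww^*$ is balanced, and it is non-trivial since $w$ is; so by \Cref{lem.balanced} the element $r_{ww^*}$ is a non-trivial member of both $R_+(C)$ and $R_+(P)$.

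Next I would pass to $C'=C\cap P$. Using the decomposition of $A$ into its simple subcoalgebras, $C'$ is again a CQG subalgebra, and under the fusion-semiring dictionary it corresponds to $R_+(C)\cap R_+(P)$, the $\bN$-span of the simples common to $C$ and $P$; by the previous step $C'$ is strictly larger than the scalars. Moreover $C'$ is invariant under the left adjoint action of $P$ on itself: for $p\in P$ and $c\in C'$ the element $p\triangleright c=p_1cS(p_2)$ lies in $C$ because $C$ is ad-invariant in $A$, and in $P$ because $P$ is a Hopf subalgebra of $A$, hence in $C'$. By \Cref{rem.normal} (which invokes the faithful flatness of a CQG algebra over a CQG subalgebra) an ad-invariant CQG subalgebra of $P$ is part of an exact sequence, and then \Cref{th.main} forces $C'$ to be the scalars or all of $P$. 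Since $C'\supsetneq\mathbb{C}$, we get $C'=P$, hence $P=C'\subseteq C$, as required.

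This is in essence a bookkeeping reduction, so I do not expect a genuine obstacle. If anything, the delicate point is the passage to $C'=C\cap P$: one needs that intersecting two CQG subalgebras of $A$ matches intersecting the corresponding sets of simple subcoalgebras (immediate from the cosemisimplicity, i.e.\ Peter--Weyl decomposition, of $A$) and that $C'=C\cap P$ inherits ad-invariance from $C$, which is the short computation carried out above. Neither step is actually hard.
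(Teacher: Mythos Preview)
Your argument is correct. It differs from the paper's proof, which simply observes that the proofs of \Cref{lem.0110}, \Cref{lem.00111100}, and \Cref{th.main} go through verbatim with $A$ in place of $P$, directly yielding $P\subseteq C$ for any non-scalar ad-invariant $C\subseteq A$. In other words, the paper re-runs the argument, whereas you reduce to the statement of \Cref{th.main} via the intersection $C'=C\cap P$.

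What your approach buys is modularity: you treat \Cref{th.main} as a black box and need only the two short observations that $C\cap P$ is ad-invariant in $P$ and is non-trivial (the $r_{ww^*}$ trick). The paper's approach avoids this extra bookkeeping at the cost of asking the reader to revisit the earlier proofs. Both are entirely legitimate; yours is arguably the cleaner logical organization.
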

\begin{proof}
The previous results go through practically verbatim for $A$ (instead of $P=P_u(Q)$), and show that any non-trivial ad-invariant $\iota:C\to A$ contains the CQG subalgebra $P\subset A$. 
\end{proof}

We saw in \Cref{rem.normal} that CQG subalgebras $\iota:C\to H$ corresponding to quotients by normal quantum subgroups have a simple characterization as precisely those which are ad-invariant. It would be interesting though, as well as convenient, to have a purely combinatorial characterization in terms of fusion semirings:

\begin{question}
Let $\iota:C\to H$ be an inclusion of CQG algebras. Can the ad-invariance of $C$ be characterized solely in terms of the fusion ring inclusion $R_+(C)\subseteq R_+(H)$? 
\end{question}

Alternatively, and also probably more tractably,

\begin{question}
Does simplicity for a compact quantum group depend only on its fusion semiring? 
\end{question}

Positive answers would provide an alternative approach to the invariance of simplicity under deformation proved in \cite[$\S$5]{MR2504527}, and would be natural companions to such results as the possibility of lifting isomorphisms of fusion semirings to honest isomorphisms for compact connected Lie groups (\cite{MR1209960}) and the fusion semiring characterization of the center for a compact group (as in \cite{MR2130607} or \cite[3.9]{MR2383894}).


\addcontentsline{toc}{section}{References}

\end{document}